\documentclass[10pt]{amsart}
\usepackage{amssymb, amscd, amsmath, amsthm, latexsym, enumerate}

\renewcommand{\geq}{\geqslant}
\renewcommand{\leq}{\leqslant}

\newcommand{\Z}{\mathbb Z}

\newtheorem{theorem}{Theorem}
\newtheorem{lemma}[theorem]{Lemma}
\newtheorem{cor}[theorem]{Corollary}

\theoremstyle{definition}

\newtheorem*{cor*}{Corollary}

\begin{document}
\title{$PD_4$-complexes with $\pi_2$ a projective $\Z[\pi_1]$-module}

\author{Jonathan A. Hillman }
\address{School of Mathematics and Statistics\\
     University of Sydney, NSW 2006\\
      Australia }

\email{jonathanhillman47@gmail.com}

\begin{abstract}
Let $X$ be a $PD_4$-complex and let $\pi=\pi_1(X)$.
If $\pi$ is torsion-free and $\pi_2(X)$ is a finitely generated projective $\Z[\pi]$-module,
then either $\pi$ is free or $\pi$ is $FP$ and $c.d.\pi=4$.
If,  moreover,  $H^3(\pi;\Z[\pi])=0$ then $\pi$ is a free product of $PD_4$-groups and a free group.
\end{abstract}

\keywords{end module, $PD_4$-complex,  projective, second homotopy group}

\maketitle
Let $X$ be a $PD_4$-complex and let $\pi=\pi_1(X)$.  
The study of $PD_4$-complexes with $\pi_2(X)$ a finitely generated free $\Z[\pi]$-module 
began with \cite{Sp03}.
Injectivity of the map $\theta:H^2(X;\Z[\pi])\to{Hom}_{\Z[\pi]}(\Pi,\Z[\pi])$
determined by the Universal Coefficient spectral sequence
is used in an essential way to prove the main result of that paper \cite[Theorem 1.1]{Sp03}.
In \cite[Lemma 2.4]{Sp03} it is asserted that ``[it] is obvious" that
if $\Pi$ is a finitely generated free module then $\theta$ is an isomorphism.
However, we feel that this should either be proven or added to the hypotheses.
We note also that there is an erroneous claim about the possible groups $\pi$,
and this claim is clearly incompatible with $\theta$ being an isomorphism.

We shall give an independent account of this material, 
under the further hypothesis that the end module $H^1(\pi;\Z[\pi])$ 
has projective dimension $\leq1$. 
In \S1 we use the Chiswell-Mayer-Vietoris presentation associated to 
a graph of groups to show that this holds if $\pi$ is torsion-free.
We also present the most frequently used notation and terminology here.
In \S2 we show that if $H^1(\pi;\Z[\pi])$ has finite projective dimension 
and $\Pi=\pi_2(X)$ is a finitely generated projective $\Z[\pi]$-module 
then either $\pi=C_2$ or $\pi$ is free or $c.d.\pi=4$. 
In \S3 we consider the consequences of the condition ``$\Pi$ is projective" alone
for $\pi$ as the fundamental group of a graph of groups.
If we assume also that $\theta$ is an isomorphism then we may
reduce to the case $\Pi=0$ studied in \cite{BBH}.
In \S4 we find that if $\pi$ is torsion-free,
$\Pi$ is projective and $\theta$ is an isomorphism
then $\pi$ is a free product of $PD_4$-groups and a free group,
and the homotopy type of $X$ is determined by the quadratic 2-type together with
the image of the fundamental class in $H_4(\pi;\Z^w)$.
The final section gives several examples, all satisfying the stronger condition $\Pi=0$.

\section{the end module}

The \emph{ end module} of a finitely generated group $\pi$ is the first cohomology group 
$H^1(\pi;\Z[\pi])=Ext^1_{\Z[\pi]}(\Z,\Z[\pi])$.
This has a natural structure as a \emph{right} $\Z[\pi]$-module.
If $\pi$ is $FP_2$ then $\pi\cong\pi\mathcal{G}$, 
where $(\mathcal{G},\Gamma)$ is a finite graph of groups with all vertex groups
either one-ended or finite and all edge groups finite \cite[Theorem VI.6.3]{DD}.
Let $E$ be the set of edges and $V$ the set of vertices in $\Gamma$,
and let $o(e)$ and $t(e)$ be the origin and terminus vertices of the edge $e$.
We shall assume that $G_e$ is a subgroup of $G_{o(e)}$ and 
let $\phi_e$ denote the canonical monomorphism from $G_e$ to $G_{t(e)}$.
We may also assume that $(\mathcal{G},\Gamma)$ is \emph{reduced}, 
meaning that if an edge $e$ has distinct endpoints ($o(e)\not=t(e)$)
then $G_e$ is a proper subgroup of the vertex groups $G_{o(e)}$ and $G_{t(e)}$.
In particular,  $G_v$ is not conjugate to a subgroup of any other vertex group.
Let $T$ be a maximal tree in $\Gamma$.
Then $\pi\mathcal{G}$ is generated by the vertex letters and a stable letter $t_e$, 
one for each edge $e$ \emph{not} in $T$. 
Let $t_e=1$ for $e\in{T}$, for simplicity of notation.

Let $V_f\leq{V}$  the set of vertices with finite vertex group.
If $\pi$ is infinite (so $H^0(\pi;\Z[\pi])=0$) then
there is an associated presentation 
\begin{equation}
\begin{CD}
0\to\oplus_{v\in{V_f}}\Z[G_v\backslash\pi]@>\Psi>>
\oplus_{e\in{E}}\Z[G_e\backslash\pi]\to{H^1(\pi;\Z[\pi])}\to0,
\end{CD}
\end{equation}
arising from a Mayer-Vietoris sequence \cite{Ch76}.
Here $\Psi$ is given by
 \[
\Psi(G_vg)=\sum_{o(e)=v}\sum_{G_eh\leq{G_v}}G_e.hg-
\sum_{t(e)=v}\sum_{G_eh\leq{G_v}}G_e.ht_eg
\]
for all vertices $v\in{V_f}$.
Since $E$ is finite, $H^1(\pi;\Z[\pi])$ is finitely generated.

We use this presentation to simplify the proof of \cite[Lemma 6]{Hi20}.

\begin{lemma}
\label{end module}
\cite[Lemma 2]{Hi20}
Let $\pi=(*_{i=1}^mG_i)*F(r)$, where each factor $G_i$ is $FP_2$ and has one end.
Then
\begin{enumerate}
\item{}if $\pi$ is a non-trivial free group then $H^1(\pi;\Z[\pi])$ has projective dimension $1$;
\item{}if $\pi$ is not free then $H^1(\pi;\Z[\pi])\cong\Z[\pi]^{m+r-1}$.
 \end{enumerate}
\end{lemma}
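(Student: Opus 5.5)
Apply the Chiswell presentation (1) to the obvious graph of groups for $\pi=(*_{i=1}^mG_i)*F(r)$. If $m\geq1$, take $\Gamma$ to have vertices $v_0,v_1,\dots,v_m$ with $G_{v_0}=1$ and $G_{v_i}=G_i$. Add edges $e_1,\dots,e_m$ from $v_0$ to $v_i$ with trivial edge groups, and $r$ loops at $v_0$ with trivial edge groups. If $m=0$ (so $\pi=F(r)$ with $r\geq1$), take a single vertex with trivial group and $r$ loops.

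This graph need not be reduced, since the edges $e_i$ have trivial group equal to $G_{v_0}$. However, the derivation of (1) from the Mayer-Vietoris sequence of \cite{Ch76} only uses that the edge groups are finite and that the vertex groups are finite or one-ended. The one-ended vertex groups $G_i$ contribute nothing to $H^1$ and are $FP_2$. So (1) is still available. Here $V_f=\{v_0\}$ and all edge groups are trivial, so every coset module is free, and (1) becomes
\[
0\to\Z[\pi]\xrightarrow{\Psi}\Z[\pi]^{m+r}\to H^1(\pi;\Z[\pi])\to0 .
\]
For $e_i$ we have $o(e_i)=v_0$ and $t_{e_i}=1$. For a loop $e$ we have $o(e)=t(e)=v_0$. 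Hence the formula for $\Psi$ gives
\[
\Psi(g)=\sum_{i=1}^m e_i\,g+\sum_{\text{loops }e}(1-t_e)\,g .
\]

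For part (2), assume $m\geq1$. Change basis of the free module $\Z[\pi]^{m+r}$ by replacing $e_1$ with $e_1'=\Psi(1)=e_1+\sum_{i\geq2}e_i+\sum_e(1-t_e)$. This is an elementary (unitriangular) change of basis, so $\Psi$ becomes the inclusion of the summand $\Z[\pi]e_1'$. Therefore $H^1(\pi;\Z[\pi])\cong\Z[\pi]^{m+r-1}$. If $m=1$ and $r=0$, the module is $0$, which is consistent with $\pi=G_1$ having one end.

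For part (1), assume $m=0$ and $r\geq1$. The sequence is then $0\to\Z[\pi]\to\Z[\pi]^r\to H^1\to0$ with $\Psi(1)=\sum_e(1-t_e)e$, so the projective dimension is at most $1$. To show it is exactly $1$, suppose $H^1$ were projective. Then the sequence would split, so $\Psi$ would have a left inverse $\Z[\pi]^r\to\Z[\pi]$, given by a row vector $(a_e)$ with $\sum_e a_e(1-t_e)=1$ (with appropriate sidedness). Apply the augmentation $\varepsilon:\Z[\pi]\to\Z$. This gives $\sum_e\varepsilon(a_e)\cdot0=1$, which is impossible. So $\Psi$ is not split, and the projective dimension is exactly $1$.

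The step I expect to need the most care is justifying (1) for this unreduced graph, together with getting the side conventions for the right-module structure and $\Psi$ correct. If reducedness were genuinely needed, I would instead contract the edges $e_i$. That turns the graph into one with all vertex groups one-ended and all edge groups trivial, and the argument would run by a direct Mayer-Vietoris computation on the tree. The cases where $\pi$ is trivial or $\pi=\Z$ should also be checked by hand. For $\pi=\Z$ we get $0\to\Z[\Z]\xrightarrow{1-t}\Z[\Z]\to\Z\to0$, and $\Z$ has projective dimension $1$ over $\Z[\Z]$, as expected.
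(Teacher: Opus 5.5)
Your proof is correct. Like the paper, you read the answer off the Chiswell presentation (1) for an explicit graph of groups with trivial edge groups, and for $\pi=F(r)$ your graph is exactly the paper's. For the non-free case, however, the paper uses a different, reduced graph: $m$ vertices carrying the groups $G_i$ and $m+r-1$ edges (a spanning tree plus $r$ further edges), all with trivial edge groups. Then $V_f=\emptyset$, so the left-hand term of (1) vanishes and $H^1(\pi;\Z[\pi])\cong\oplus_{e\in E}\Z[\pi]=\Z[\pi]^{m+r-1}$ at once.

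The paper's route needs no computation of $\Psi$, no change of basis, and no remark about unreduced graphs. Your extra trivial vertex $v_0$ costs you those two additional steps, but you handle both correctly. Chiswell's Mayer--Vietoris sequence only uses that edge groups are finite and vertex groups are finite or one-ended and finitely generated; reducedness plays no role. And the unit coefficient of $e_1$ in $\Psi(1)$ makes the image of $\Psi$ a free summand.

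The paper's graph is what you get by contracting only $e_1$, merging $v_0$ into $v_1$. Contracting all the $e_i$, as your fallback remark suggests, would produce the vertex group $G_1*\cdots*G_m$, which is not one-ended when $m\geq2$.

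In the free case, your augmentation argument that $\Psi$ does not split adds something. It shows the projective dimension is exactly $1$, not merely $\leq1$, a point the paper leaves implicit.
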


\begin{proof}
If $\pi\cong{F(r)}$ then $\pi\cong\pi\mathcal{G}$,
where the underlying graph $\Gamma$ has one vertex 
and $r$ edges,  and the vertex and edge groups are all trivial,
while if $\pi\cong(*_{i=1}^mG_i)*F(r)$ then we may assume that $\Gamma$ has $m$ vertices 
and $m+r-1$ edges,  
with the $i$th vertex group being $G_i$ (so $V_f$ is empty),
and all edge groups trivial.
In each case the claim follows from the presentation (1) above.
\end{proof}

If $H^1(\pi;\Z[\pi])$ has finite projective dimension must $\pi$ be torsion-free?
We shall explore further consequences of the presentation (1) in the final section below,
but Lemma \ref{end module} shall suffice for our present purposes.

If $R$ is a right $\Z[\pi]$-module $\overline{R}$ shall denote the left module
with $\pi$-action given by $g.m=w(g)mg^{-1}$, for all $g\in\pi$ and $m\in{M}$,
where $w=w_1(X)$. 
If $L$ is a left $\Z[\pi]$-module let $L^*=\overline{Hom_{\Z[\pi]}(L,\Z[\pi])}$ be 
its conjugate dual.
Let $E^i\Z=\overline{Ext^i_{\Z[\pi]}(\Z,\Z[\pi])}=\overline{H^i(\pi;\Z[\pi])}$ for $i\geq0$.
Then $E^0\Z=0$ if $\pi$ is infinite, while $E^1\Z$ is the conjugate of the end module.
In what follows, the coefficient modules for group homology shall always be left modules.
Thus if $C\leq\pi$ and $M$ is a left $\Z[C]$-module then $H_i(C;M)=Tor_i^{\Z[C]}(\Z,M)$.
We use the $Tor$ notation to emphasize the distinction between left and right modules.

Using Poincar\'e duality and conjugating the exact sequence of \cite [Lemma 3.3]{Hi}
gives an exact sequence of left $\Z[\pi]$-modules
\begin{equation}
\begin{CD}
0\to{E^2\Z}\to\Pi@>ev>>\Pi^*=\overline{Hom_{\mathbb{Z}[\pi]}(\Pi,\mathbb{Z}[\pi])}\to{E^3\Z}\to0,
\end{CD}
\end{equation}
where $e:\Pi\to{\Pi^*}$ is the composition of  a Poincar\'e duality isomorphism 
with the evaluation homomorphism $\theta$.

\section{$\Pi$ finitely generated and projective}

It is easy to see that if $\Pi$ is finitely generated then $\pi$ is $FP_3$.
If $H^1(\pi;\Z[\pi])$ and $\Pi$ are projective modules a stronger result holds.

\begin{theorem}
\label{hom dim}
Let $X$ be a $PD_4$-complex and $\pi=\pi_1(X)$. Then
\begin{enumerate}
\item{}if $\pi$ is finite and non-trivial then $\pi=C_2$ and $X$ is non-orientable;
\item{}if $\pi$ is infinite and $C\leq\pi$ then 
$Tor_i^{\Z[C]}(\Z,E^1\Z)\cong{Tor_{i+4}^{\Z[C]}(\Z,\Z)}$ for $i>1$;
\item{}if $H^1(\pi;\Z[\pi])$ has finite projective dimension and $\Pi=\pi_2(X)$ 
is a finitely generated projective $\Z[\pi]$-module then $\pi$ is $FP$
and either $\pi=C_2$ or $\pi$ is free or $c.d.\pi=4$;
\item{}if $\pi$ has one end then $E^4\Z\cong\Z$.
\end{enumerate}
\end{theorem}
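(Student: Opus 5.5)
The whole theorem works from the cellular chain complex $C_*=C_*(\widetilde X)$ of the universal cover. This is a finite complex of finitely generated free $\Z[\pi]$-modules of length $4$, and Poincar\'e duality identifies $\overline{Hom_{\Z[\pi]}(C_{4-*},\Z[\pi])}$ with $C_*$ up to chain homotopy. I read the projectivity of $\Pi$ as a standing hypothesis for parts (1), (2) and (4), and not only for (3). Without it, (1) is false, since every finite group is the fundamental group of a closed orientable $4$-manifold.

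The homology of $C_*$ is as follows: $H_0=\Z$, $H_1=0$, $H_2=\Pi$, $H_3(\widetilde X)\cong H^1(X;\Z[\pi])\cong E^1\Z$, and $H_4=H^0(\pi;\Z[\pi])$ conjugated. The last group is $0$ if $\pi$ is infinite and $\Z^w$ if $\pi$ is finite.

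\emph{Part (1).} Let $\pi$ be finite. A projective $\Z[\pi]$-module becomes free after tensoring with $\mathbb Q$ (Swan). Comparing classes in the Grothendieck group of $\mathbb Q[\pi]$-modules, the Euler characteristic of $\mathbb Q\otimes C_*$ gives
\[
[\mathbb Q]+[\mathbb Q^w]+[\mathbb Q\otimes\Pi]=\chi(X)[\mathbb Q[\pi]].
\]
Since $[\mathbb Q\otimes\Pi]$ is itself a multiple of $[\mathbb Q[\pi]]$, we get $[\mathbb Q]+[\mathbb Q^w]=k[\mathbb Q[\pi]]$ for some integer $k$. Comparing dimensions gives $2=k|\pi|$. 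If $\pi\neq1$ then $|\pi|=2$ and $\mathbb Q\oplus\mathbb Q^w\cong\mathbb Q[C_2]$, which forces $w\neq0$.

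\emph{Part (2).} Restrict $C_*$ to $C$; it remains a complex of free $\Z[C]$-modules. Use the hyperhomology spectral sequence
\[
E^2_{pq}=Tor_p^{\Z[C]}(\Z,H_q(C_*))\Rightarrow H_{p+q}(\Z\otimes_{\Z[C]}C_*)=H_{p+q}(X_C;\Z),
\]
where $X_C$ is the covering space with group $C$. Because $\Pi$ is projective, the only nonzero rows are $q=0$, which is $Tor_p(\Z,\Z)$, and $q=3$, which is $Tor_p(\Z,E^1\Z)$, apart from the single term $E^2_{0,2}$. The abutment vanishes in degrees $>4$. Hence the differential
\[
d^4:Tor_{i+4}^{\Z[C]}(\Z,\Z)\to Tor_i^{\Z[C]}(\Z,E^1\Z)
\]
must be injective and surjective as soon as both total degrees $i+4$ and $i+3$ exceed $4$, that is, for $i>1$.

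\emph{Part (3).} I would run the same argument with arbitrary coefficients $M$ in place of $\Z$ and with $C=\pi$. If $E^1\Z$ has finite projective dimension, then $H_j(\pi;M)=0$ for $j$ large. Alternatively and more concretely, splice the truncations of $C_*$ with a finite projective resolution of $E^1\Z$ and with the projective $\Pi$. Either way, this produces a finite resolution of $\Z$ by finitely generated projectives, so $\pi$ is $FP$, hence of finite cohomological dimension, hence torsion-free (or trivial, or $C_2$ by (1)).

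Then $\pi$ is $FP_2$ and torsion-free, so the graph-of-groups decomposition of \S1 has trivial edge groups. This makes $\pi$ a free product as in Lemma \ref{end module}, so $E^1\Z$ is free or has projective dimension $1$. Applying the Tor-shift with coefficients gives $H_j(\pi;M)=0$ for $j\geq6$, and the explicit resolution should lower this to $c.d.\pi\leq4$. To get $c.d.\pi=4$ when $\pi$ is not free, I would argue by contradiction. Suppose $c.d.\pi\leq3$, so $E^4\Z=0$. I would try to show, using the sequence (2) and a cohomological analogue of the spectral sequence above, that the fundamental class forces $E^4\Z\neq0$ unless the vertex groups are trivial. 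This step, pinning $c.d.$ to exactly $4$ rather than merely $\leq4$, is where I expect the main difficulty.

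\emph{Part (4).} If $\pi$ has one end then $E^1\Z=0$, so $C_*$ has homology only $\Z$ in degree $0$ and $\Pi$ in degree $2$. Use the Universal Coefficient spectral sequence
\[
Ext^p_{\Z[\pi]}(H_q(C_*),\Z[\pi])\Rightarrow H^{p+q}(X;\Z[\pi]).
\]
Since $\Pi$ is projective, $Ext^p(\Pi,\Z[\pi])=0$ for $p>0$. The $q=2$ row is therefore concentrated at $p=0$, and no differential can hit or leave the $(4,0)$ term. Hence, after conjugation,
\[
E^4\Z\cong H^4(X;\Z[\pi])\cong H_0(\widetilde X)=\Z,
\]
the second isomorphism being Poincar\'e duality.
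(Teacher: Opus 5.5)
Your arguments for (1), (2) and (4) are sound. Parts (2) and (4) are the paper's arguments in spectral-sequence form. For (2), the paper splices $0\to Z_3\to C_3\oplus\Pi\to C_2\to C_1\to C_0\to\Z\to0$ with $0\to C_4\to Z_3\to E^1\Z\to0$ and shifts dimension, which is what your $d^4$-isomorphism encodes. For (4), the paper reads off $E^4\Z$ from the dual of that resolution, which is what your Universal Coefficient spectral sequence computes.

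For (1) you take a genuinely different route. The paper observes that, for orientable $X$, the spliced sequence $0\to\Z\to C_4\to C_3\oplus\Pi\to C_2\to C_1\to C_0\to\Z\to0$ would give $\pi$ cohomological period dividing $5$, which is impossible for a non-trivial finite group, and it then deduces the non-orientable case from the orientable one. Your Euler characteristic in $K_0(\mathbb{Q}[\pi])$ gives $|\pi|\leq2$ and $w\neq0$ at once. The cost is Swan's theorem, which you also need for the $C_i$: a finitely dominated $X$ only has a finite \emph{projective} model $C_*$, not necessarily a free one.

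The genuine gap is in (3). You do not prove $c.d.\pi\geq4$ when $\pi$ is not free, and the contradiction you sketch via sequence (2) and the vertex groups is not an argument. The missing step is short, and you already have the tool from (4).

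\begin{enumerate}
\item Once $\pi$ is torsion-free and not free, Lemma \ref{end module} makes $E^1\Z$ finitely generated free. Then $Z_3\cong C_4\oplus E^1\Z$, and $0\to C_4\oplus E^1\Z\to C_3\oplus\Pi\to C_2\to C_1\to C_0\to\Z\to0$ is a length-$4$ resolution by finitely generated projectives. This is what gives $FP$ and $c.d.\pi\leq4$. Your earlier splicing, which had no control on finite generation, does not.
\item Now run your Universal Coefficient spectral sequence without assuming one end. In total degree $4$ the terms $Ext^2(\Pi,\Z[\pi])$, $Ext^1(E^1\Z,\Z[\pi])$ and $Hom(H_4(C_*),\Z[\pi])$ all vanish.
\item The only differential touching $E_2^{4,0}=H^4(\pi;\Z[\pi])$ is $d_4$ from $Hom(E^1\Z,\Z[\pi])$. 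Hence $H^4(X;\Z[\pi])\cong\Z$ is a quotient of $H^4(\pi;\Z[\pi])$.
\item So $H^4(\pi;\Z[\pi])\neq0$ and $c.d.\pi=4$.
\end{enumerate}

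The paper gets the same epimorphism directly. It dualizes the resolution above and restricts to the summand $C_4$; since the boundary $C_4\to C_3\oplus\Pi$ has zero $\Pi$-component, restriction carries coboundaries to coboundaries and is onto. No analysis of vertex groups is needed.
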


\begin{proof}
The equivariant cellular chain complex for $\widetilde{X}$ is chain homotopy equivalent 
to a complex $C_*$ of finitely generated projective left $\Z[\pi]$-modules, of length 4.
Since $\widetilde{X}$ is 1-connected, $H_0(C_*)\cong\mathbb{Z}$ and $H_1(C_*)=0$,
while $H_2(C_*)\cong\Pi$, by the Hurewicz Theorem, 
and $H_3(C_*)\cong\overline{H^1(\pi;\Z[\pi])}=E^1\Z$,  by Poincar\'e duality.

If $\pi$ is finite and $X$ is orientable then we have an exact sequence
\[
0\to\Z\to{C_4}\to{C_3\oplus\Pi}\to{C_2}\to{C_1}\to{C_0}\to\Z\to0.
\]
Hence $\pi$ has cohomological period dividing 5, and so $\pi=1$,
since a non-trivial finite group must have even cohomological period.
It follows that if $\pi$ is finite and non-trivial then $X$ is non-orientable and $\pi=C_2$.

Suppose now that $\pi$ is infinite. 
Then $H_4(C_*)=0$,  and we have two exact sequences of left $\Z[\pi]$-modules:
\[
0\to{Z_3}\to{C_3\oplus\Pi}\to{C_2}\to{C_1}\to{C_0}\to\Z\to0
\]
and
\[
0\to{C_4}\to{Z_3}\to{H_3(X;\Z[\pi])\cong{E^1\Z}}\to0,
\]
in which $\Pi$ and $C_0,\dots,C_4$ are projective $\Z[\pi]$-modules.
Hence if $C<\pi$ we have 
\[
Tor_i^{\Z[C]}(\Z,E^1\Z)\cong{Tor_i^{\Z[C]}(\Z,Z_3)}\cong{Tor_{i+4}^{\Z[C]}(\Z,\Z)}
%=H_{i+4}(C;\Z)
\quad\mathrm{for}~i>1.
\]
In particular,  if $\pi$ has a non-trivial finite cyclic subgroup $C$ then $E^1\Z$ has exactly one
copy of the augmentation module $\Z$ as a $\Z[C]$-summand,
and the end module has projective dimension $\infty$.

Together these sequences give an exact sequence
\begin{equation}
\begin{CD}
0\to {C_4\oplus{E^1\Z}}\to{C_3\oplus\Pi}\to{C_2}\to{C_1}\to{C_0}\to\Z\to0.
\end{CD}
\end{equation}
If $E^1\Z$ has finite projective dimension,
the augmentation module $\Z$ has finite projective dimension,
since $\Pi$ is projective, and so $c.d.\pi<\infty$.
Hence $\pi$ is torsion-free, 
and so the indecomposable factors of $\pi$ are copies of $\Z$ or have one end.

We may assume $\pi$ is not a free group.
Then $E^1\Z$ is a finitely generated free module, by Lemma \ref{end module},
and so the sequence (3) is a finite projective resolution of length 4.
Hence $\pi$ is $FP$ and $c.d.\pi\leq4$.
Since the dual of this sequence gives the cohomology of $\pi$ with coefficients $\Z[\pi]$,
we see that $E^4\Z=H^4(\pi;\Z[\pi])$ maps onto $H^4(X;\Z[\pi])\cong\Z$.
Therefore either $\pi$ is a free group or $c.d.\pi=4$.
If $\pi$ has one end then $E^4\Z$ maps isomorphically to $H^4(X;\Z[\pi])\cong\Z$.
\end{proof}

In particular, $\pi$ cannot be the fundamental group of an aspherical surface or 3-manifold, 
contrary to an assertion in \cite{Sp03}.
In Theorem 1.1 of \cite{Sp03} it is assumed that $M$ is an orientable 4-manifold
with $\pi=\pi_1(M)$ such that $H_i(\pi;\Z)=0$ for $i=4$ or 5, and
$\pi_2(M)$ is a finitely generated free $\Z[\pi]$-module.
The only groups $\pi$ compatible with these conditions and with the extra condition
here that the end module have finite projective dimension are free groups.

If $\pi$ has one end but is not a $PD_4$-group then either $E^2\Z$ 
is not finitely generated as an abelian group or $E^2\Z=0$ and $E^3\Z$
is not finitely generated as an abelian group \cite[Theorem 3]{Fa75}.

If $\Pi$ is a free module then adding 3-cells to $X$ 
along a basis for $\Pi$ gives a  4-dimensional cell complex $Y$ 
with $\pi_1(Y)\cong\pi$, $\pi_2(Y)=0$,
$H_3(Y;\Z[\pi])\cong{E^1\Z}$ and $H_q(Y;\Z[\pi])=0$ for $q>3$.
(Thus if $\pi$ has one end then $Y\simeq{K(\pi,1)}$.)
When $\Pi$ is projective but not free,  
adding 3- and 4-cells gives such a 4-complex.
Part ($b$) of Theorem \ref{hom dim} then reduces to \cite[Lemma 2.10]{Hi}.
(If $X$ is finite and $\pi$ is of type $FF$ then $\Pi$ is finitely generated and stably free, 
as follows from sequence (3) and Schanuel's Lemma.
In this case $Y$ is finite.)

\section{relaxing the hypothesis on the end module?}

The hypothesis that the end module should have finite projective dimension is unnecessarily strong.
For instance,  if $X=S^1\times\mathbb{RP}^3$ then $\Pi=0$ and $E^1\Z\cong\Z$.
Since $\pi=\Z\oplus{C_2}$ has non-trivial torsion $E^1\Z$ has infinite projective dimension.

In this section we shall explore the consequences of the condition ``$\Pi$ is projective"
for the structure of $\pi$ as the fundamental group of a graph of groups
(with no assumption on the end module).
Most of our results follow from those of \cite[\S7 and \S8]{BBH},
where it is assumed that $\Pi=0$,
but we shall give a self-contained account here, 
which plays off the sequence (1) against Theorem \ref{hom dim}.($b$),
and assumes only that $\Pi$ is projective.

\begin{theorem}
\label{graph of groups}
Let $X$ be an orientable $PD_4$-complex such that $\pi=\pi_1(X)$ is infinite 
and $\Pi=\pi_2(X)$ is projective.
Let $(\mathcal{G},\Gamma)$ be a reduced finite graph of groups with 
all edge groups finite and all vertex groups finite or one-ended, 
and such that $\pi\mathcal{G}\cong\pi$.
Then
\begin{enumerate}
\item{}if $g\in\pi$ has prime power order $p^k$ 
then $g$ is conjugate into some $G_e$;
\item{} if $G_{o(e)}$ and $G_{t(e)} $ each have one end 
then $G_e$ is malnormal in $\pi$;
\item{}if $G_{o(e)}$ is finite while $G_{t(e)}$ has one end then
$[N_{G_{o(e)}}(G_e):G_e]\leq2$;
\item{}if $v$ is an endpoint of just one edge $e$ and $G_v$ is finite 
then $G_e$ normally generates $G_v$,
and $G_v$ is not nilpotent.
\end{enumerate}
\end{theorem}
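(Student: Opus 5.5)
The plan is to play the presentation (1) of the end module off against Theorem \ref{hom dim}.($b$). For a finite subgroup $C\leq\pi$, part ($b$) gives $Tor_i^{\Z[C]}(\Z,E^1\Z)\cong H_{i+4}(C;\Z)$ for $i>1$. On the other hand, (1) restricted to $C$ is a short exact sequence of permutation modules $\oplus\Z[G_v\backslash\pi]\to\oplus\Z[G_e\backslash\pi]$. By the Mackey decomposition, each $\Z[G_x\backslash\pi]$ restricted to $\Z[C]$ is $\oplus_{G_xgC}\Z[(g^{-1}G_xg\cap C)\backslash C]$, and Shapiro's Lemma gives $Tor_j^{\Z[C]}(\Z,\Z[D\backslash C])\cong H_j(D;\Z)$. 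Conjugation by $\overline{\ \cdot\ }$ only twists by $w$, which is trivial since $X$ is orientable. The long exact $Tor$ sequence then reads
\[
\cdots\to\oplus_{v,g}H_j(C\cap G_v^g;\Z)\to\oplus_{e,g}H_j(C\cap G_e^g;\Z)\to H_{j+4}(C;\Z)\to\oplus_{v,g}H_{j-1}(C\cap G_v^g;\Z)\to\cdots
\]
for $j>1$. I would use this as a computational tool, choosing $C$ cleverly in each part.

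For ($a$), take $C=\langle g\rangle$, cyclic of order $p^k$. Since $C$ is finite, it fixes a vertex of the Bass–Serre tree, so it is conjugate into some vertex group. If $C$ met no conjugate of any $G_e$ nontrivially, then all edge terms $H_j(C\cap G_e^g)$ with $j\geq1$ would vanish. Since $H_{j+4}(C;\Z)\neq0$ for $j$ odd, exactness would force injections $H_{j+4}(C)\hookrightarrow\oplus H_{j-1}(C\cap G_v^g)$ in infinitely many degrees. That alone is no contradiction, so I would sharpen it. The vertex terms appear with a map of Euler-characteristic type, so I would compare orders using periodicity of cyclic group homology: the free vertex-orbit count and the edge count are fixed. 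The goal is to show that the sum over double cosets forces at least one summand with $C\cap G_e^g=C$. I expect this to need the prime-power hypothesis: for $C$ a $p$-group, the subgroups $C\cap G_e^g$ are totally ordered, and if each is proper it lies in the index-$p$ subgroup $C'$, so restriction to $C'$ should give a contradiction via the transfer. This step, extracting a genuine contradiction from the exact sequence, is the main obstacle. I would try first to work mod $p$ with Tate cohomology, where periodicity makes everything finite-dimensional.

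For ($b$) and ($c$), take $C$ to be a finite group of the form $G_e\cap G_e^h$, or a subgroup of $N(G_e)$ properly containing $G_e$. For ($b$), if $G_e^h\cap G_e\neq1$ with $h\notin G_e$, then the element lies in two distinct edge stabilisers. Counting the double-coset summands in degree $j$, the edge side would contribute at least two full copies of $H_j(C)$ with no vertex term to cancel them, since the vertex groups are one-ended and so absent from $V_f$. This contradicts exactness in a degree where $H_{j+4}(C)$ is small relative to them, for instance rationally or mod $p$ counting ranks. Part ($c$) is the same count with $C=N_{G_{o(e)}}(G_e)$: the finite vertex contributes one term, and the edge contributes $[N:G_e]$ terms. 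Balancing these gives index at most $2$.

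For ($d$), the terminal finite vertex $v$ gives the segment $H_j(G_v)\to H_j(G_e)$ with cokernel sitting in $H_{j+4}(G_v)$. Applying this with $C=G_v$ and $j=1$ after abelianising, I would show that $H_1(G_e)\to H_1(G_v)$ is onto, and likewise modulo the normal closure of $G_e$. This gives that $G_e$ normally generates $G_v$. A proper subgroup of a nilpotent group never normally generates it, and reducedness makes $G_e$ proper, so $G_v$ is not nilpotent.
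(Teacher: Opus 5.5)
Your setup (restrict the conjugate of (1) to a finite $C$, decompose by Mackey, apply Shapiro, compare with Theorem~\ref{hom dim}($b$)) is exactly the paper's. The gap is in ($a$): you leave the decisive step open, and you dismiss the observation that closes it.

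\textbf{Part ($a$).} Take $C$ cyclic. Then every $C\cap G_x^g$ is cyclic, so $H_{2j}(C\cap G_x^g;\Z)=0$ for $j>0$. The even-degree $Tor$ of $E^1\Z$ also vanishes in the relevant range. So the long exact sequence breaks into short exact sequences
\[
0\to\oplus_{v\in V_f}H_{2j+1}(C;\Z[\pi/G_v])\to\oplus_{e\in E}H_{2j+1}(C;\Z[\pi/G_e])\to C\to0\qquad(j>0).
\]
In particular, under your provisional hypothesis that all edge terms vanish (which is stronger than the negation of ($a$)), the injections $H_{j+4}(C)\hookrightarrow\oplus H_{j-1}(C\cap G_v^g)$ \emph{are} a contradiction for $j$ odd, since the target is $0$. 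To finish ($a$) you need only what you already noticed. If no $C\cap G_e^g$ equals $C$, they all lie in the subgroup of index $p$. Then the middle term has exponent dividing $p^{k-1}$ and cannot map onto $C\cong\Z/p^k$. No transfer or Tate cohomology is needed, but without this surjectivity your ($a$) is not a proof.

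\textbf{Part ($c$).} You use the wrong subgroup. $N=N_{G_{o(e)}}(G_e)$ permutes the cosets $G_eh$, $h\in N$, transitively, so they form one $N$-orbit contributing a single term $H_j(G_e;\Z)$, not $[N:G_e]$ terms. Instead take $C\leq G_e$ of prime order $p$. It fixes each of these $[N:G_e]$ cosets. These edge cosets meet the image of $\Psi$ only through the single fixed vertex coset $G_{o(e)}$, since the other endpoint is one-ended. So the cokernel above, which is $C\cong\Z/p$, would have $\mathbb{F}_p$-dimension at least $[N:G_e]-1$; hence $[N:G_e]\leq2$.

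\textbf{Part ($b$).} The same device fixes it. Use $C$ of prime order in $G_e\cap h^{-1}G_eh$; for non-cyclic $C$ the term $H_{j+4}(C;\Z)$ need not be small, and rational counts give nothing. Also, $\Z[G_e\backslash\pi]$ is untouched by $\Psi$ because neither endpoint of $e$ lies in $V_f$. It is not because all vertex groups are one-ended; other vertex groups may be finite.

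\textbf{Part ($d$).} Your argument fails for three reasons. Theorem~\ref{hom dim}($b$) is only available for $i>1$, so $j=1$ is excluded. The sequence maps vertex terms to edge terms, not $H_1(G_e)\to H_1(G_v)$. And $H_1$-surjectivity would not give normal generation anyway (take $G_e=1$ and $G_v$ perfect).

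The paper deduces ($d$) from ($a$). If a prime $p$ divides $[G_v:N_e]$, a Sylow $p$-subgroup of $G_v$ contains an element outside $N_e$. By ($a$) this element fixes an edge of the Bass--Serre tree, as well as the vertex with stabiliser $G_v$. Hence it fixes the first edge of the geodesic between them. That edge's stabiliser is a $G_v$-conjugate of $G_e$, because $e$ is the only edge at $v$. So the element lies in $N_e$, a contradiction.
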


\begin{proof}
Conjugating the exact sequence (1) gives a presentation
\begin{equation*}
\begin{CD}
0\to\oplus_{v\in{V_f}}\overline{\Z[G_v\backslash\pi]}@>\Upsilon>>
\oplus_{e\in{E}}\overline{\Z[G_e\backslash\pi]}\to{E^1\Z}\to0.
\end{CD}
\end{equation*}
Let $C<\pi$ be a finite subgroup.
Since $C$ acts on the coset spaces $G_v\backslash\pi$ and $G_e\backslash\pi$ by permutations,
the terms $\oplus_{v\in{V_f}}\Z[G_v\backslash\pi]$ and 
$\oplus_{e\in{E}}\Z[G_e\backslash\pi]$ are direct sums of 
right permutation modules $\Z[D\backslash{C}]$, where $D\leq{C}$.
%If $w|_C=1$ then 
Since $\overline{\Z[D\backslash{C}]}\cong\Z[C/D]$ as a left module,
$E^1\Z$ has a presentation by sums of left permutation modules.

If $C$ is cyclic then $H_{2j}(C;E^1\Z)=0$ and $H_{2j+1}(C;E^1\Z)\cong{C}$
for $j>0$,  by part ($b$) of Theorem \ref{hom dim}.
Hence we have exact sequences
\begin{equation}
\begin{CD}
0\to\oplus_{v\in{V_f}}H_{2j+1}(C;\Z[\pi/G_v])\to\oplus_{e\in{E}}H_{2j+1}(C;\Z[\pi/G_e])\to{C}\to0
\end{CD}
\end{equation}
for all $j>0$. 
The sequences are exact at the right since $H_{2j}(C;\Z[C/D])=H_{2j}(D;\Z)=0$ for all $j>0$.

If $g$ has prime power order $p^k$ and $D\leq{C}$ then 
$\Z[C/D]\cong\Z[C/p^jC]$,  for some $j\leq{k}$.
If $w(g)=1$ and $g$ is not conjugate into any edge group then 
$\oplus_{e\in{E}}\Z[\pi/G_e]$ has no $\Z[C]$-summands isomorphic to $\Z$.
Hence the exponent of $\oplus_{e\in{E}}H_{2j+1}(C;\Z[\pi/G_e])$ divides $p^{k-1}$,  for $i>1$.
This contradicts the exactness of sequence (4).

If $G_e\cap{h^{-1}G_eh}\not=1$ for some $h\not\in{G_e}$
then the cosets $1.G_e.$ and $h.G_e$ are each fixed by $G_e\cap{h^{-1}G_eh}$.
If $G_{o(e)}$ and $G_{t(e)} $ each have one end 
then these generate copies of $\Z$ as direct summands of $E^1\Z$.
This contradicts the exactness of sequence (4),
so there can be no such $h$,  and $G_e$ must be malnormal in $\pi$.

Suppose  that $G_{o(e)}$ is finite while $G_{t(e)}$ has one end.
Then $\Upsilon$ restricts to a $\Z[G_e]$-homomorphism from $\Z=\Z[1.G_{o(e)}]$ to 
$\Z[G_{o(e)}/G_e]=\oplus_{G_{o(e)}/G_e}\Z[h.G_e]$,
with torsion-free cokernel.
Each coset of $G_e$ in its normalizer in $G_{o(e)}$ generates a $\Z$ summand of 
$\Z[G_{o(e)}/G_e]$,  with image in $E^1\Z$ a direct summand.
Hence 
\[
[N_{G_{o(e)}}(G_e):G_e]\leq2.
\]

Suppose that $e$ is the only edge with $v$ is an endpoint and $G_v$ is finite.
Let $N_e$ be the normal closure of $G_e$ in $G_v$.
If $N_e\not=G_v$ then there is a prime $p$ dividing $[G_v:N_e]$.
Let $S$ be the Sylow $p$-subgroup of $G_v$.
Then $S\not\leq{N_e}$ (divisibility of orders by $p$),
and so $S$ has is a cyclic group $C\not\leq{N_e}$.
This contradicts ($b$), and so $G_v=N_e$.
In particular, $G_v$ is not nilpotent, since maximal proper subgroups of 
finite nilpotent groups are normal \cite[5.2.41]{Ro}.
\end{proof}

If $X$ is non-orientable then exactness of sequence (4) above breaks down if 
$w=w_1(X)\not=1$,
since $H_{2j}(D;\Z^w)\cong{C_2}$ if $D$ acts nontrivially on $\Z^w$.
We must allow for twisting by $w=w_1(X)$.
We give one such argument, 
corresponding to part (3) of \cite[Theorem 12]{BBH}.

\begin{lemma}
Let $X$ be a $PD_4$-complex such that $\pi=\pi_1(X)\cong\pi\mathcal{G}$,
where $(\mathcal{G},\Gamma)$ be a reduced finite graph of groups with 
all edge groups finite and all vertex groups one-ended,
and such that $\Pi=\pi_2(X)$ is projective.
If  $g\in\pi$ has finite order then $w(g)=1$.
\end{lemma}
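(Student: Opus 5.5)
We argue by contradiction, reducing to a cyclic subgroup of prime order. Suppose $g\in\pi$ has finite order and $w(g)=-1$. Replacing $g$ by a suitable odd power, we may assume $g$ has order $2^k$ with $k\geq1$ and still $w(g)=-1$. Let $C=\langle g\rangle$.

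Since $C$ is finite, it fixes a vertex of the Bass--Serre tree. So $C$ is conjugate into some vertex group $G_v$, which is one-ended. Because every vertex group is one-ended, $V_f$ is empty. Sequence (1), conjugated as in the proof of Theorem \ref{graph of groups}, therefore gives an isomorphism of left modules
\[
E^1\Z\cong\oplus_{e\in E}\overline{\Z[G_e\backslash\pi]}.
\]
Restricted to $C$, this is a direct sum of modules $\overline{\Z[D\backslash C]}$, where $D$ ranges over the stabilizers $C\cap hG_eh^{-1}$. The conjugation now involves $w$, so each summand is the induced module $\Z[C]\otimes_{\Z[D]}\Z^{w}$, with $\Z^w$ twisted by $w|_D$. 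By Shapiro's lemma,
\[
H_i(C;E^1\Z)\cong\oplus H_i(D;\Z^{w|_D}).
\]

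Next we compute both sides of Theorem \ref{hom dim}.($b$) for $C$, taking $i>1$ even. The right side is $Tor^{\Z[C]}_{i+4}(\Z,\Z)=H_{i+4}(C;\Z)=0$. On the left, each summand $H_i(D;\Z^{w|_D})$ with $i$ even vanishes when $w|_D$ is trivial. It equals $\Z/2$ when $D\neq1$ and $w|_D$ is nontrivial, since for a cyclic group acting by $-1$ the homology is $\Z/2$ in even positive degrees and $0$ in odd degrees.

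The key step is to produce a summand with $D\ni g$, or at least with $D$ containing an element of $C$ with $w=-1$. Here $w|_D$ is nontrivial exactly when $D$ contains $g$ itself. If some edge group conjugate contains $g$, that summand contributes $\Z/2\neq0$ in even degrees and gives the contradiction.

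So it remains to handle the case where $g$ fixes no edge of the tree. Then every $D$ is a proper subgroup of $C$, so $w|_D$ is trivial and $H_{\text{odd}}(D;\Z)\cong D$. Now use odd degrees $i>1$. The left side is a direct sum of the groups $D$, each of exponent dividing $2^{k-1}$. The right side is $H_{i+4}(C;\Z)\cong C$, which has exponent $2^k$. This is a contradiction, exactly as in part (1) of Theorem \ref{graph of groups}. If the sum is infinite, the exponent bound still holds.

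The main obstacle is identifying $\overline{\Z[D\backslash C]}$ correctly as the $w$-twisted permutation module, and checking the Shapiro-lemma computation of its homology with the right twist, since the conjugation convention $g.m=w(g)mg^{-1}$ is what introduces the twist. I would verify this first on the basic case $D=C$, where the module is $\Z^w$, and $D=1$, where it is free.
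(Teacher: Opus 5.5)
Your proof is correct and follows the paper's argument. With $V_f=\emptyset$, you restrict $E^1\Z$ to $C$ as a sum of the $w$-twisted induced modules $\Z[C]\otimes_{\Z[D]}\Z^{w|_D}$, compute by Shapiro's lemma, and contradict Theorem~\ref{hom dim}.($b$). The differences are cosmetic: you reduce to $2$-power order and split into cases, whereas the paper keeps $|C|=2\ell$ and compares only odd degrees, where summands with $w|_D\neq1$ vanish and the rest have exponent dividing $\ell$; so your second-case argument alone already covers the first.
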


\begin{proof}
If no vertex group is finite then the end module is isomorphic to  $\oplus_{e\in{E}}\Z[G_e\backslash\pi]$.
If $g\in\pi$ is orientation reversing then $C=\langle{g}\rangle$ has even order, $2\ell$ say.
The end module is a direct sum of right permutation modules $\Z[D\backslash{C}]$,  
and so $E^1\Z$ is the direct sum of the conjugate left $\Z[\pi]$-modules.
Let $\Z^w$ be the infinite cyclic group with left $\pi$-action determined by $w=w_1(X)$.
If $[C:D]$ is even then $\overline{\Z[D\backslash{C}]}\cong\Z[C/D]$ as a left module,
and $H_i(C;\overline{\Z[D\backslash{C}]})\cong{H_i(D;\Z)}$,  for all $i$.
If $[C:D]$ is odd then $H_i(C;\overline{\Z[D\backslash{C}]})=H_i(D;\Z^w)$,
for all $i$,
as can be seen by comparison of the spectral sequences for $C$ and $D$
as extensions of $\Z^\times$ by their orientation-preserving subgroups.
In either case, $H_i(C;\overline{\Z[D\backslash{C}]})$ has exponent dividing $\ell$.
Hence the summands of $Tor_i^{\Z[C]}(\Z,E^1\Z)$ all have exponent dividing $\ell$,
for all $i$, whereas $H_i(C;\Z)\cong{C}$ for $i$ odd.
Since this contradicts part ($b$) of Theorem \ref{hom dim},
$g$ must be orientation-preserving.
\end{proof}

Theorem 7 of \cite{BBH} asserts that if $g\in\pi$ has finite order $m>1$ and the centralizer 
$C_\pi(g)$ is infinite then $C_\pi(g)$ has two ends and either $w(g)=1$ or $4|m$.
Hence the normalizer $N_\pi(G_e)$ of any edge group is finite or has two ends,
which strengthens part ($d$) of Theorem \ref{graph of groups}.
The argument is based on Crisp's analysis of the actions of finite cyclic groups on trees \cite{Cr00},
and applies with little change to our situation.

The later results of \cite{BBH} depend on the splitting theorem \cite[Theorem B]{BBH},
for $PD_4$-complexes with $\Pi=0$.
At present we have no equivalent result for $PD_4$-complexes with $\Pi$ projective.
(If $\pi=A*B$ is the fundamental group of a $PD_4$-complex with $\Pi$ projective
is the same true for $A$ and $B$?)
However if $\pi$ is virtually free, so all the vertex groups $G_v$ are finite, 
then $E^2\Z=E^3\Z=0$, and so $ev$ is an isomorphism, by the exactness of sequence (2).
In this case we can reduce to the case $\Pi=0$,
and the further conditions on $(\mathcal{G},\Gamma)$ given in \cite[\S7--\S10]{BBH} all hold.

\section{assuming also $E^3\Z=0$}

The homomorphism $ev:\Pi\to\Pi^*$ determines the cohomology intersection pairing $\lambda_X$ 
by the formula $\lambda_X(u,v)=ev(v)(u\cap[X])$, for $u,v\in{H^2(X;\Z[\pi])}$ \cite[\S5]{Hi20}.
(The symbol $\lambda_X$ is used for the equivalent homological formulation in \cite{Sp03}.)
It is not obvious to us that $\Pi$ being free implies that the evaluation map $ev:\Pi\to\Pi^*$ is an isomorphism (as asserted in \cite[Lemma 2.4]{Sp03}),
although we do not know of any counterexample.
(It  \emph{is} obvious that finitely generated free modules are isomorphic to their duals.)
Since this assertion implies that $E^2=E^3=0$, by the exact sequence (2),
this lemma is incompatible with the claim in \cite{Sp03}
that $\Pi$ is free if $\pi$ is the group of an aspherical surface or 3-manifold.

\begin{theorem}
Let $X$ be a $PD_4$-complex and let $\pi=\pi_1(X)$.
If  $\Pi=\pi_2(X)$ is a finitely generated projective $\Z[\pi]$-module and $E^3\Z=0$
then the indecomposable factors of $\pi$ which have one end are finitely presentable $PD_4$-groups.
\end{theorem}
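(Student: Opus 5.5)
Since $\Pi$ is finitely generated, $\pi$ is $FP_3$, hence $FP_2$, and $\pi\cong\pi\mathcal{G}$ for a reduced finite graph of groups with finite edge groups and finite or one-ended vertex groups. The plan is to first reduce to the torsion-free situation of Theorem \ref{hom dim}.($c$). The statement is about the indecomposable factors, so I will first show that the one-ended vertex groups $G_v$ are $FP$ with $c.d.=4$ and $H^q(G_v;\Z[G_v])=0$ for $q\neq4$, $H^4\cong\Z$. I would then invoke the Bieri--Eckmann criterion: an $FP$ group $G$ of cohomological dimension $n$ with $H^q(G;\Z[G])=0$ for $q<n$ and $H^n(G;\Z[G])\cong\Z$ is a $PD_n$-group. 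Finite presentability should come from the factor being a vertex group in a splitting of the finitely presentable group $\pi$ over finite groups.

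The main obstacle is torsion, because the hypotheses do not assume the end module has finite projective dimension. My plan is to exploit $E^3\Z=0$ through sequence (2), which gives $\Pi\cong E^2\Z\oplus\Pi'$ with $ev$ onto $\Pi^*$ and $\Pi^*$ projective. For a one-ended $G_v$ that is a vertex group with finite incident edge groups, Shapiro-type arguments give
\[
H^q(\pi;\Z[\pi])\cong\oplus H^q(G_v;\Z[\pi])\oplus(\text{edge terms})\quad\text{for } q\geq2,
\]
since the edge terms vanish in degrees $\geq1$ for finite groups. Hence $H^3(G_v;\Z[G_v])=0$. Next I want to show $G_v$ itself has finite cohomological dimension. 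For this I would use the infinite covering $X_v$ of $X$ corresponding to $G_v$, or the restriction of the complex $C_*$ of Theorem \ref{hom dim} to $\Z[G_v]$. The aim is to prove that $E^1\Z$ restricted to $G_v$ is a permutation module whose point stabilizers are conjugates of edge groups intersected with $G_v$. By Theorem \ref{graph of groups}.($a$),($b$), all torsion of $G_v$ lies in malnormal edge groups. I would first try to show that the one-endedness of $G_v$, together with the isomorphism in Theorem \ref{hom dim}.($b$) applied to a cyclic $C\leq G_v$, forces every edge group meeting $G_v$ to be trivial. If that fails, I would fall back to working with a torsion-free finite index subgroup, provided one can be produced.

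Once $G_v$ is torsion-free, I restrict sequence (3) to $\Z[G_v]$. Then $E^1\Z|_{G_v}$ is a sum of free $\Z[G_v]$-modules and copies of $\Z$ coming from the coset fixed by $G_v$. Splicing gives a finite-length resolution, so $G_v$ is $FP$ with $c.d.\leq4$. Theorem \ref{hom dim}.($d$), transported to $G_v$, gives $H^4(G_v;\Z[G_v])\cong\Z$ and $H^1=0$, and $H^3=0$ was shown above. For $H^2(G_v;\Z[G_v])$ I use that $E^2\Z$ is a direct summand of the finitely generated projective $\Pi$, so it is a finitely generated projective module. It is also the cohomology of an $FP$ group with group ring coefficients, and the remark after Theorem \ref{hom dim} citing Farrell then shows it must vanish or else $G_v$ is already $PD_4$. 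With $H^q(G_v;\Z[G_v])=0$ for $q\neq4$ and $H^4\cong\Z$, Bieri--Eckmann makes $G_v$ a $PD_4$-group.
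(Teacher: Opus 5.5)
\paragraph{Gap 1: the intermediate target is false.} You set out to prove that every one-ended vertex group $G_v$ of the Dunwoody decomposition is a $PD_4$-group, but the theorem concerns only the one-ended \emph{free} factors, and the stronger claim fails. The example in \S5 shows this. The double $M$ of $V=U/j$ has $\pi_2(M)=0$, so $\Pi$ is finitely generated projective and $E^2\Z=E^3\Z=0$ by sequence (2). Yet its vertex groups $\Z^4\rtimes_{-1}C_2$ are one-ended and contain the $C_2$ edge groups, so they are not $PD_4$-groups. Hence your attempt to force every edge group meeting $G_v$ to be trivial cannot succeed. Your fallback to a torsion-free subgroup of finite index would give at most virtual $PD_4$-groups. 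For a genuine free factor (all incident edge groups trivial), torsion-freeness does follow from Theorem \ref{graph of groups}.($a$) plus a fixed-point argument in the Bass--Serre tree. But that theorem is proved only for orientable $X$, which is not assumed here.

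\paragraph{Gap 2: the $H^2$ step and the rest of the argument do not work.} Farrell's result, as quoted after Theorem \ref{hom dim}, says only this: when $H^3=0$, a one-ended group that is not $PD_4$ has $H^2(G;\Z[G])$ not finitely generated \emph{as an abelian group}. A non-zero finitely generated projective $\Z[\pi]$-module ($\pi$ infinite) is never finitely generated as an abelian group. So knowing that $E^2\Z$ is a projective summand of $\Pi$ yields no contradiction.

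The missing idea is the paper's weak-finiteness argument. From $\Pi\cong\Pi^*\oplus E^2\Z$, dualize and use $\Pi^{**}\cong\Pi$ to get $\Pi^*\cong\Pi\oplus(E^2\Z)^*$. Adding a complement $Q$ with $\Pi\oplus Q\cong\Z[\pi]^s$ gives $\Z[\pi]^s\cong\Z[\pi]^s\oplus E^2\Z\oplus(E^2\Z)^*$. Kaplansky's weak finiteness then forces $E^2\Z=0$, so $ev$ is an isomorphism.

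Your steps giving $FP$, $c.d.\,G_v=4$ and $H^4(G_v;\Z[G_v])\cong\Z$ are also unsupported:
\begin{enumerate}
\item Restricting $C_*$ to $\Z[G_v]$, which has infinite index when $G_v\neq\pi$, gives modules that are not finitely generated, so this does not by itself give $FP$.
\item No coset $G_eg$ is fixed by the infinite group $G_v$, so there are no summands $\Z$.
\item A length-one resolution of $E^1\Z$ over $\Z[G_v]$ bounds $c.d.\,G_v$ only by $5$.
\item Theorem \ref{hom dim}.($d$) cannot be ``transported'' to $G_v$, since $G_v$ is not yet known to be the group of a $PD_4$-complex.
\end{enumerate}

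The paper's second step supplies exactly this. Once $ev$ is an isomorphism, \cite[Theorem 5]{Hi20} gives a 2-connected degree-1 map $X\to P$ with $\pi_2(P)=0$. The splitting theorem \cite[Theorem B]{BBH} then decomposes $P$ as a connected sum parallel to the free product decomposition of $\pi$. The summand for a one-ended factor is aspherical by Poincar\'e duality, so that factor is a $PD_4$-group.
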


\begin{proof}
If $E^3\Z=0$ then $ev$ is an epimorphism, by the exact sequence (1),
and so $P\cong{P^*}\oplus{E}$, where $E=E^2\Z$.
Dualizing gives $P^*\cong{P^{**}}\oplus{E^*}$,
and so $P^*\cong{P\oplus{E^*}}$, since $P^{**}\cong{P}$.
Let $Q$ be a finitely generated projective complement to $P$,
with $P\oplus{Q}\cong\Z[\pi]^s$.
Then 
\[
\Z[\pi]^s\cong{P\oplus{Q}}\cong{P^*}\oplus{E^*}\oplus{E}\oplus{Q}\cong\Z[\pi]^s\oplus{E^*}\oplus{E},
\]
and so $E^*\oplus{E}=0$, 
since group rings are weakly finite, by a result of Kaplansky.
(See \cite[\S1.5]{Hi} and the references cited there.)
Therefore $ev$ has trivial kernel and so it is an isomorphism.

Since $\Pi$ is  a finitely generated projective $\Z[\pi]$-module and $ev$ is an isomorphism,
there is a 2-connected degree-1 map $f:X\to{P}$ to a $PD_4$-complex $P$
with $\pi_2(P)=0$ \cite[Theorem 5]{Hi20}.

If $\pi$ splits as a free product then $P$ has a parallel decomposition as a connected sum
of $PD_4$-complexes with $\pi_2=0$ \cite[Theorem B]{BBH}.
If $G$ is an indecomposable factor of $\pi$ with one end then the corresponding summand of $X$ is aspherical,  by Poincar\'e duality,  and so $G$ is a $PD_4$-group.
Since $X$ is a $PD_4$-complex, it is finitely dominated, by definition,
and so $\pi$ and its indecomposable factors are finitely presentable.
\end{proof}

The condition ``$E^3\Z=0$"  is satisfied by $PD_n$-groups with $n\not=3$,
and by finite groups and groups with two ends,
and is preserved under free product.
(It also holds for all virtually free groups.)
Thus it is a natural condition to include if each indecomposable factor of $\pi$ has finitely many ends. 

The algebraic notion of Poincar\'e duality group does not require finite presentability.
There is a parallel result for $PD_4$-spaces, 
which are 4-dimensional cell complexes which satisfy Poincar\'e duality of formal dimension 4,
but which are not assumed to be finitely dominated.

\begin{cor}
If $\pi$ is torsion-free then it is a free product of finitely presentable $PD_4$-groups and a free group.
\qed
\end{cor}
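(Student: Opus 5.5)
The corollary is meant under the hypotheses of the preceding theorem: $X$ is a $PD_4$-complex, $\Pi=\pi_2(X)$ is a finitely generated projective $\Z[\pi]$-module, and $E^3\Z=0$. We add that $\pi$ is torsion-free. The plan is to combine the structure of $\pi$ as a fundamental group of a graph of groups with the preceding theorem.

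First, $\pi$ is finitely presentable, since $X$ is finitely dominated. So by \cite[Theorem VI.6.3]{DD} $\pi\cong\pi\mathcal{G}$ for a reduced finite graph of groups with finite edge groups, and with vertex groups that are finite or one-ended. Because $\pi$ is torsion-free, every finite subgroup is trivial. Hence all edge groups are trivial, and every finite vertex group is trivial.

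In a reduced graph, a vertex with trivial group meets only edges with distinct endpoints if it is not isolated. Since such an edge group would then be a proper subgroup of a trivial group, which is impossible, trivial vertices can only occur when the graph has a single vertex. Either way, the standard description of a graph of groups with trivial edge groups gives
\[
\pi\cong(*_{i=1}^mG_i)*F(r),
\]
where each $G_i$ is one-ended and $F(r)$ is free of rank equal to the first Betti number of $\Gamma$. This is exactly the form of Lemma \ref{end module}. The one-ended factors $G_i$ are freely indecomposable (by Stallings' theorem) and are the indecomposable factors with one end. The remaining indecomposable factors are copies of $\Z$, which make up the free group $F(r)$.

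Finally, we apply the preceding theorem: each indecomposable factor with one end is a finitely presentable $PD_4$-group. Hence $\pi$ is a free product of finitely presentable $PD_4$-groups and a free group. We can also note that the free factors of $\pi$ are finitely presentable, since free factors of a finitely presented group are retracts of it. In addition, Theorem \ref{hom dim} applies, because by Lemma \ref{end module} the end module has projective dimension at most $1$, so $\pi$ is $FP$. This agrees with the conclusion but is not needed.

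The main point needing care is the reduction from the Dunwoody decomposition to the free product form. In particular, one must check that trivial finite vertex groups do not create extra free factors beyond the free group $F(r)$ coming from the first Betti number of the graph. Everything else follows directly from the preceding theorem.
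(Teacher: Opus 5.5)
Your argument is correct and is essentially the paper's. The corollary is stated with \qed, being immediate from the preceding theorem once the torsion-free, finitely presentable group $\pi$ is written as a free product of one-ended factors and a free group (the step already used in the proof of Theorem \ref{hom dim}), and that is exactly the reduction you carry out via the Dunwoody decomposition. (Your sentence about trivial vertices is garbled as written. The intended point is that in a reduced graph every edge at a vertex with trivial group must be a loop, so by connectedness such a vertex forces $\Gamma$ to have a single vertex.)
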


Every such group is the fundamental group of a $PD_4$-complex with $\Pi=0$.
Let $G_1,\dots,G_k$ be finitely presentable $PD_4$-groups, 
and let $P_i=K(G_i,1)$ be the corresponding $PD_4$-complexes.
Then the connected sum $Y=\#_{i=1}^kP_i$ is a $PD_4$-complex with
$\pi_1(Y)\cong\pi=*_{i=1}^kG_i$ and $\pi_2(Y)=0$.
We can realize free factors of $\pi$ by connect summing with
copies of $S^3\times{S^1}$ and $S^3\tilde\times{S^1}$, .
If $N$ is a 1-connected $PD_4$-complex  and $\beta=\beta_2(N)$ then
$X=Y\#N$ is a $PD_4$-complex with $\pi_1(X)\cong\pi$ and $\pi_2(X)$ free of rank $\beta$.

Let $\mu=c_{X*}[X]$ be the image of a fundamental class for $X$ under the classifying map 
$c_X:X\to{K(\pi,1)}$. 

\begin{cor}
The homotopy type of $X$ is determined by $\pi$, $w$, $\mu$ and $\lambda_X$.
\end{cor}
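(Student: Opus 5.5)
The plan is to pass from $X$ to the $PD_4$-complex $P$ with $\pi_2(P)=0$ produced in the proof of the preceding theorem, and then use the corresponding classification for the case $\Pi=0$. Under the standing hypotheses ($\pi$ torsion-free, $\Pi$ finitely generated projective, $E^3\Z=0$), that proof shows $ev:\Pi\to\Pi^*$ is an isomorphism. \cite[Theorem 5]{Hi20} then gives a 2-connected degree-1 map $f:X\to P$ with $\pi_2(P)=0$. By the previous corollary, $\pi$ is a free product of $PD_4$-groups and a free group, so by \cite{BBH} $P$ is determined up to homotopy by $\pi$, $w$ and $\mu_P=c_{P*}[P]\in H_4(\pi;\Z^w)$. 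Since $f$ has degree 1 and $c_X\simeq c_P\circ f$, we get $\mu_P=\mu$. So the first step is to show that $(\pi,w,\mu)$ determines the homotopy type of $P$. If $P$ has the form $\#P_i\#(S^3\times S^1)$'s, one can check this directly: each aspherical summand $K(G_i,1)$ is determined by $G_i$ and $w|_{G_i}$, and $\mu$ fixes the orientations.

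Next, let $X'$ be another complex with the same invariants, and let $\alpha:\pi_1(X)\cong\pi_1(X')$ be an isomorphism compatible with $w$, $\mu$ and $\lambda$. By the first step there is a homotopy equivalence $h:P\to P'$ inducing $\alpha$, so $h\circ f$ and $f'$ are both 2-connected degree-1 maps to $P'$. We then compare the quadratic 2-types. Since $f$ is 2-connected, the 2-type of $X$ is that of $P$ with $\pi_2$ replaced by $\Pi$. More precisely, $X$ is obtained up to homotopy as the fibre-wise ``connected sum'' of $P$ with the kernel data $(\Pi,\lambda_X)$, and $\lambda_X$ is nonsingular because $ev$ is an isomorphism. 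The cleanest route is to show that $X\simeq P\#_\pi N$ in the sense of \cite[\S5]{Hi20}. In other words, the kernel $K_2(f)=\Pi$ with the form $\lambda_X$ determines how the 2-cells and the top cell are attached: the 4-cell attaching map of $X$ is that of $P$ plus a term in $\pi_3$ of the 3-skeleton, recorded by the quadratic refinement of $\lambda_X$.

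Concretely, I would build a map $X\to X'$ skeleton by skeleton. The 2-types of $X$ and $X'$ are $k$-invariant data $(\pi,\Pi,k)$. Here $k$ is determined by that of $P$, which vanishes in the relevant sense since $\Pi$ is projective: the extension $0\to\Pi\to\pi_2(X^{(2)}\text{-stage})$ splits, so $H^3(\pi;\Pi)$ is computed by projective coefficients. An isometry $(\Pi,\lambda_X)\cong(\Pi',\lambda_{X'})$ over $\alpha$ then gives an equivalence of 2-types. The invariant $\mu$, together with $\lambda$, fixes the image of $[X]$ in $H_4$ of the 2-type. By Baues--Bleile type results (the quadratic 2-type plus the fundamental class determine a $PD_4$-complex when $\pi$ has $c.d.\leq4$ and is torsion-free, as in \cite{BBH}), this yields $X\simeq X'$.

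The main obstacle is this last step: showing that the image of $[X]$ in $H_4(P_2(X);\Z^w)$ is determined by $\mu$ and $\lambda_X$. I would split $H_4(P_2(X);\Z^w)$ using the projection $P_2(X)\to K(\pi,1)$. The spectral sequence has the terms $H_4(\pi;\Z^w)$, $H_2(\pi;\Gamma_W(\Pi))$ and so on. The part lying over $H_0(\pi;\Gamma_W(\Pi))$ is the quadratic form given by $\lambda_X$. The intermediate terms $H_p(\pi;H_q)$ with $0<p<4$ should be controlled because $\Pi$ and $\Gamma_W(\Pi)$ are induced from free modules, so they are acyclic in positive degrees ($\pi$ being torsion-free). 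Verifying this vanishing, or that the differentials kill these terms, is the step I expect to be hardest.
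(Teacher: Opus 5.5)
Your overall plan agrees with the paper's: pass to the complex $P$ with $\pi_2(P)=0$ given by \cite[Theorem 5]{Hi20}, note that $\mu_P=\mu$, and classify $P$ by $(\pi,w,\mu)$. For that first step the paper simply observes that $P_2(P)\simeq K(\pi,1)$, so the Baues--Bleile fundamental triple of $P$ is $[\pi,w,\mu]$ \cite{BB08}. No uniqueness of connected-sum decompositions is needed, and no torsion-freeness either.

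The gap is in the second step, recovering $X$ from $P$ and $\lambda_X$. The paper does this in one line. Since $f$ is 2-connected of degree 1 and $\pi_2(P)=0$, $P$ is a strongly minimal model for $X$, and \cite[Corollary 8]{Hi20} says $X$ is then determined by $P$ and $\lambda_X$. You point towards \cite[\S5]{Hi20} but do not use this result. Instead you try to reprove it by applying Baues--Bleile to $X$ itself. The decisive claim, that the image of $[X]$ in $H_4(P_2(X);\Z^w)$ is determined by $\mu$ and $\lambda_X$, is exactly what you leave open.

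Your vanishing computation is correct as far as it goes. With $\pi$ torsion-free, $\Gamma_W(\Pi)$ is projective, and $c.d.\pi\leq4$ kills $d^5$ from $H_5(\pi;\Z^w)$. So you get an extension $0\to H_0(\pi;\Gamma_W(\Pi))\to H_4(P_2(X);\Z^w)\to H_4(\pi;\Z^w)\to0$. To finish you would still need two things:
\begin{enumerate}
\item[(i)] A splitting that your equivalence of 2-types preserves. Sections of $P_2(X)\to K(\pi,1)$ are classified by $H^2(\pi;\Pi)$. This group vanishes because $\Pi$ is a summand of $\Z[\pi]^s$ and $E^2\Z=\mathrm{Ker}(ev)=0$, but you must say so.
\item[(ii)] An actual proof that the $H_0(\pi;\Gamma_W(\Pi))$-component of $c_{X*}[X]$ is the element corresponding to $\lambda_X$. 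For instance, evaluate cup products of classes in $H^2(X;\Z[\pi])$ on it, and show these evaluations detect $H_0(\pi;\Gamma_W(\Pi))$.
\end{enumerate}
Your sentence ``the part lying over $H_0(\pi;\Gamma_W(\Pi))$ is the quadratic form given by $\lambda_X$'' asserts (ii) without argument. Yet (ii) is the real content of the cited corollary.

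Two smaller points. The vanishing of the $k$-invariant comes from $H^3(\pi;\Pi)=0$, which uses $E^3\Z=0$ and not merely projectivity of $\Pi$. Also, your route needs $\pi$ torsion-free and $c.d.\pi\leq4$, whereas the paper's argument does not.
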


\begin{proof}
The complex $P$ is a strongly minimal model for $X$, 
and so the homotopy type of $X$ is determined by $P$ and $\lambda_X$,
by of Corollary 8 of \cite{Hi20}.
Since $\pi_2(P)=0$, the Postnikov 2-stage $P_2(P)$ is just $K(\pi,1)$.
Hence the homotopy type of $P$ is
determined the triple $[\pi,w,\mu]$ \cite{BB08}.
\end{proof}

\section{some examples}

The manifolds $\mathbb{RP}^4$, $\#^r(S^1\times{S^3})$ and $T^4=\mathbb{R}^4/\Z^4$ provide 
simple examples with $\Pi=0$ and $\pi=C_2$, $\pi\cong{F(r)}$ and $c.d.\pi=4$, respectively.
Taking connected sums with 1-connected 4-manifolds gives examples with $\Pi$ a non-zero free module.

Let $j$ be inversion in the abelian group $T^4=\mathbb{R}^4/\Z^4$.
Then $j$ has 16 fixed points (the points of order 2 in the group).
Let $U$ be the complement of a set of small equivariant 4-discs about each of these fixed points.
The involution $j$ acts freely on $U$,
with quotient an orientable 4-manifold $V$ with boundary 16 copies of $\mathbb{RP}^3$.
Let $M$ be the closed 4-manifold obtained by doubling $V$ along its boundary.
Then $\pi=\pi_1(M)\cong\pi\mathcal{G}$, 
where  the underlying graph $\Gamma$ has two vertices and 16 edges, 
the vertex groups are copies of $\Z^4\rtimes_{-1}C_2$ and the edge groups are all $C_2$.
Thus $\pi\cong(\Z^4*\Z^4*F(15))\rtimes{C_2}$.
It is easily seen that $\pi_2(M)=0$, 
and that the edge groups are indeed malnormal in $\pi$.
 
If all the vertex groups are finite then $\pi$ is virtually free, 
and so $E^i\Z=0$ for $i>1$.
Hence $ev$ is an isomorphism, and we may reduce to the case $\Pi=0$,
since we are assuming that $\Pi$ is projective.
In this case the $p$-Sylow subgroups of the vertex groups are cyclic, for $p$ odd.

If $\pi$ has two ends then the maximal finite normal subgroup $F$ must have cohomological period dividing 4.
Conversely, 
if $N$ is a finite group with cohomological period 4 then there is a $PD_3$-complex $Y$ 
with $\pi_1(Y)\cong{N}$, 
and $X=Y\times{S^1}$ is a $PD_4$-complex with $\pi=\pi_1(X)\cong{N}\times\Z$ 
and $\pi_2(X)=0$.
If $N\not=1$ then $H^1(\pi;\Z[\pi])\cong\Z$ has infinite projective dimension as a $\Z[\pi]$-module.

The group $C_5$ acts on $\mathbb{RP}^4$ by cyclically permuting the harmonic coordinates
$[u:v:w:x:y]\mapsto[v:w:x:y:u]$.
There is one fixed point,
and so  $C_5$ acts freely on $\mathbb{RP}^4\#\mathbb{RP}^4$. 
The quotient is indecomposable as a connected sum,
and has fundamental group $D_\infty\times{C_5}$.

The following example shows that the homotopy type of such $PD_4$-complexes
 is not usually determined by the quadratic 2-type alone,
 even when $\Pi=0$.

Let $\Gamma=F(2)/\gamma_3F(2)$ be the Heisenberg group of upper triangular matrices in $SL(3,\Z)$.
Then $\Gamma^{ab}\cong\Z^2$.
Let $A$ be an automorphism of $\Gamma$ whose image $[A]$ in 
$Aut(\Gamma^{ab})\cong{GL(2,\Z)}$ has determinant $-1$ and trace $>0$.
Then $A$ is not conjugate to its inverse.
Every automorphism of $\Gamma$ is orientation preserving, 
and so the semidirect product $G_A=\Gamma\rtimes_A\Z$ is an orientable $PD_4$-group.
Since $G_A$ has abelianization of rank 1, the normal subgroup $A$ is characteristic, 
and since $A$ is not conjugate to its inverse,  
$G_A$ has no orientation reversing automorphism.

This group is the group of an orientable $\mathbb{S}ol^4_1$-manifold $M_A$. 
The connected sums $M_A\#M_A$ and $M_A\#-M_A$ have the same quadratic 2-types
(with trivial intersection pairing!) but are not homotopy equivalent.

In all of these examples $\pi$ is virtually torsion-free, and $ev$ is an isomorphism.
Are there examples without these properties?

%\newpage

\end{document}